\newtheorem{definition}{Definition}[section]
\newtheorem{lemma}[definition]{Lemma}
\newtheorem{prop}[definition]{Proposition}
\newtheorem{theorem}[definition]{Theorem}
\newtheorem{cor}[definition]{Corollary}
\newtheorem{conj}[definition]{Conjecture}
\newtheorem{ques}[definition]{Question}
\theoremstyle{definition}
\newtheorem{fact}[definition]{Fact}
\newcommand*{\lcm}{\mathop{{\rm lcm}}\nolimits}
\newcommand*{\Jac}{\mathop{{\rm Jac}}\nolimits}
\newcommand*{\rad}{\mathop{{\rm rad}}\nolimits}
\newcommand{\Zg}{\mathop{{\rm Zg}}\nolimits}
\newcommand*{\ex}{\exists}
\newcommand*{\fa}{\forall}
\newcommand*{\ov}{\overline}
\newcommand*{\seq}{\subseteq}
\newcommand*{\sq}{\sqrt}
\newcommand*{\fty}{\infty}
\newcommand*{\wg}{\wedge}
\newcommand*{\wh}{\widehat}
\newcommand*{\wt}{\widetilde}
\newcommand*{\bsm}{\left(\begin{smallmatrix}}
\newcommand*{\esm}{\end{smallmatrix}\right)}
\newcommand*{\bp}{\begin{pmatrix}}
\newcommand*{\ep}{\end{pmatrix}}
\newcommand*{\mL}{\mathcal{L}}
\newcommand*{\A}{\mathbb{A}}
\newcommand*{\F}{\mathbb{F}}
\newcommand*{\N}{\mathbb{N}}
\newcommand*{\Q}{\mathbb{Q}}
\newcommand*{\R}{\mathbb{R}}
\newcommand*{\Z}{\mathbb{Z}}
\newcommand*{\Alg}{\mathbb{A}}
\newcommand*{\al}{\alpha}
\newcommand*{\be}{\beta}
\newcommand*{\ga}{\gamma}
\newcommand*{\Ga}{\Gamma}
\renewcommand*{\phi}{\varphi}
\renewcommand*{\th}{\theta}
\begin{document}

\footskip=30pt

\date{}

\title[]{On the decidability of the theory of modules over the ring of algebraic integers}

\author[]{Sonia L'Innocente, Carlo Toffalori}

\address[S.~L'Innocente, C.~Toffalori]{University of Camerino, School of Science and Technologies,
Division of Mathematics, Via Madonna delle Carceri 9, 62032 Camerino, Italy}

\email{sonialinnocente@unicam.it, carlo.toffalori@unicam.it}

\author[]{Gena Puninski}

\address[G.~Puninski]{Belarusian State University, Faculty of Mechanics and Mathematics, av. Nezalezhnosti 4,
Minsk 220030, Belarus}

\email{punins@mail.ru}

\thanks{The first two authors were supported by Italian
PRIN 2012 and GNSAGA-INdAM}

\subjclass[2000]{03B25, 03C98 (primary), 13C11}

\keywords{Ring of algebraic integers, Decidability of the theory of modules}

\begin{abstract}
We will prove that the theory of all modules over the ring of algebraic integers is decidable.
\end{abstract}

\maketitle

\section{Introduction}\label{S-intro}

Our original intention was to prove that the theory of all modules over the ring of algebraic integers is decidable.
When working out the proofs we found that they are applicable to a wider class of natural examples, hence the
resulting theorem sounds as follows. Suppose that $B$ is an effectively given (hence countable) B\'ezout domain such
that each nonzero prime ideal $P$ of $B$ is maximal, the residue field
$B/P$ is infinite, and the maximal ideal of
the localization $B_P$ is not finitely generated. Then the theory of all $B$-modules is decidable.

For instance this applies to the ring of algebraic integers $\A= \wt\Z$, and also to the algebraic closure of the polynomial ring $\F_p[t]$ over a prime field, i.e.\ its integral closure in the algebraic closure of its field of fractions $\F_p(t)$.

The reason why we require each nonzero prime ideal of $B$ to be maximal is the following. It has been noticed by
Gregory \cite{Gre15} (generalizing an early remark in \cite{PPT}) that the prime radical relation $a\in \rad(b)$
in a commutative ring $R$ is interpretable in the first order theory of its modules. If each nonzero prime ideal
of a B\'ezout domain $B$ is maximal then this relation is first order in the language of rings, and is decidable
in the examples of our interest.

However one should be aware that in general only few fragments of the theory of a ring are interpretable in the
language of its modules: for instance the theory of integers as a ring is undecidable, but the theory of abelian
groups is decidable by a classical result by Szmielew.

The proof of the main result uses basic algebraic facts and two pieces from the model theory of modules. The first
one is the model theory of modules over commutative B\'ezout domains recently developed in \cite{P-T15}. The second
is the decidability (or rather the proof of it in \cite{PPT} given in `elementary geometry' terms) of the theory of
modules over an effectively given valuation domain whose value group is archimedean and dense. In this paper we
deal with such domains locally: each localization $B_P$ is of this form. Thus the decidability proof is a kind of
local-global analysis which reduces our original question to a problem of covering intervals in value groups over
some constructive set of maximal ideals.

Working on this text we were much impressed by van den Dries--Macintyre \cite{D-M} and Prestel--Schmid \cite{P-S}
papers, from which few proofs and most examples are borrowed. The authors are also indebted to Angus Macintyre for
reading a preliminary version of this paper and drawing our attention to \cite{Rab}.

There is a clear need to relax the countability of the ring (hence the countability of the language) assumption
when talking about the decidability of the theory of its modules. It would allow to include into consideration 
other prominent examples of B\'ezout domains, such as rings of entire (real or complex) functions and the ring
of analytic real functions. However this requires to recast some foundations of model theory for modules starting
with its milestone, the Baur--Monk theorem, - so we postpone this discussion to a forthcoming paper.

\section{Valuation domains}\label{S-val}

All rings in this paper will be commutative with unity, and all modules will be unital with multiplication written 
on the right.

For basics in model theory of modules the reader is referred to Mike Prest's book \cite{Preb2}.  For instance
the \emph{Ziegler spectrum} of $R$, $\Zg_R$, is a topological space whose points are (isomorphism types of)
indecomposable pure injective modules and the topology is given by basic open (quasi-compact) sets $(\phi/\psi)$,
where $\phi$ and $\psi$ are pp-formulae in one variable. This open set consists of modules $M\in \Zg_R$ such that
$\phi(M)$ properly contains its intersection with $\psi(M)$.

A ring is said to be a \emph{domain}, if it has no zero divisors; and a \emph{valuation domain} if its ideals are
linearly ordered by inclusion. Some basics  of the theory of valuation domains can be found in \cite[Ch.~6]{F-S}.
For instance the Jacobson radical of $V$, $\Jac(V)$, is the largest proper ideal of $V$. Furthermore the
\emph{value group} $\Ga$ of $V$ is a linearly ordered (additive) abelian group such that each element $a\in V$ is assigned its
value $v(a)\in \Ga$, for instance $v(0)= \fty$ and $v(1)=0$. The positive cone $\Ga^+$ of $V$  can be
identified (as a poset) with the chain of proper principal ideals of $V$.

Recall that $\Ga$ is said to be \emph{archimedean} if for each $\al, \be\in \Ga^+$ there is a positive integer $n$
such that $n\al> \be$. In terms of $V$ this means that, for all nonzero noninvertible $a, b\in V$, there is an $n$
such that $a^n\in bV$, i.e.\ the \emph{prime radical relation} $a\in \rad(b)$ is trivial. Another way to say this
is that the only nonzero prime ideal of $V$ is the maximal one.

We say that $\Ga$ is \emph{dense} if for any $\al, \be\in \Ga^+$ there exists $\ga\in \Ga$ such that
$\al< \ga< \be$. In terms of $V$ this means that one could insert a principal ideal $cV$ strictly between
any two principal ideals $bV\subset aV$. Furthermore this holds true if and only if $\Jac(V)$ is not a
principal ideal.

It is known (see \cite[Sect.~II.2]{F-S}) that a linearly ordered abelian group $\Ga$ is archimedean if and only if
it is isomorphic to a subgroup of $(\R,+, \leq)$. Furthermore if $\Ga$ is additionally countable and dense, then
it is isomorphic to $(\Q,+, \leq)$.

Let $V$ be a valuation domain whose value group $\Ga$  is isomorphic to $(\Q,+, \leq)$. In this section we will
give a detailed description of $\Zg_V$ as a topological space. Some basic facts on model theory of modules over
valuation domains (including the description of indecomposable pure injective modules) can be found in
\cite[Sect.~11]{Punb} or \cite[Sect.~4]{PPT} - we will use them freely.

For instance the points of $\Zg_V$ are classified using pairs $(J,I)$ of ideals of $V$. Taking values we see that
each nonzero ideal of $V$ defines a cut (i.e.\ an ideal) on the positive cone $\Q^+$ of the value group of $V$.
In particular positive rationals correspond to nonzero principal ideals of $V$. In fact two types of ideals arise
in this way.

First every positive real $s$ defines a cut on $\Q^+$ (and hence an associated ideal) whose upper part consists of
the rationals $q$ (including $\fty$) such that $s \leq q$,  - we will denote this cut also by $s$. Next assume that
$q$ is a nonnegative rational and choose $b\in V$ whose value equals $q$. Then the ideal $b\Jac(V)$ defines a cut
`to the right of $q$' whose  upper part is the open interval $(q,\fty]$, - we will denote this ideal by $q_+$. For
instance the Jacobson radical of $V$ defines the cut $0_+$.

We will show a pair of cuts $(I,J)$ as a point on the quarter plane $\wh\Ga^+\times \wh\Ga^+$, where $\wh\Ga^+$
is obtained from $\Ga^+$ by completing by cuts.

$$
\begin{xy}
(0,0)*+={}="sy";(0,20)*+={.}="ty";%
(0,0)*+={\circ}="sx";(30,0)*+={.}="tx";(30,20)*+={.}="B";%
{\ar@{.>} "sx";"tx"};
{\ar@{.>} "sy";"ty"};
{\ar@{-} "ty";"B"};
{\ar@{-} "tx";"B"};
(-3,0)*+={_0};(1,-3)*+={_0};
(-4,20)*+={_{\infty}};(34,-2)*+={_\infty};
(15,0)*+={}="I";(0,10)*+={}="J";(15,10)*+={\bullet}="Z";%
{\ar@{--} "I";"Z"};{\ar@{--} "Z";"J"};
(15,-3)*+={_I};(-2,10)*+={_J};
\end{xy}
$$

\vspace{3mm}

For instance the pairs of positive reals give usual points, but the pair $(0_+, 0_+)$ represents the
infinitesimal point which is infinitely close to the origin.

The points of the Ziegler spectrum correspond to equivalence classes of pairs of cuts $(J,I)$.

Here the pair $(I,J)$ is equivalent to the pair $(K,L)$ if and only if there are $a \notin I$, $b \notin J$
such that (with respect to the multiplication in $V$) either 1) $I= Ka$, $Ja= L$ or 2) $Ib= K$, $J= Lb$. Thus these classes can be thought of as
lines $x+y= const$ of slope $-1$ on this quarter plane.

Here is the equivalence class of the point $(3,2)$.

$$
\begin{xy}
(0,0)*+={}="sy";(0,20)*+={.}="ty";%
(0,0)*+={\circ}="sx";(30,0)*+={.}="tx";(30,20)*+={.}="B";%
{\ar@{.>} "sx";"tx"};
{\ar@{.>} "sy";"ty"};
{\ar@{-} "ty";"B"};
{\ar@{-} "tx";"B"};
(4,15)*={}="bl";(24,4)*={}="el";(13,10)*+={\bullet};%
{\ar@{--}^(.6)l "bl";"el"};
(0,10)*+={}="a";(13,0)*+={}="b";(13,10)*+={}="z";%
{\ar@{.} "a";"z"};{\ar@{.} "b";"z"};
(-3,10)*+={_2};(13,-3)*+={_3};
\end{xy}
$$

\vspace{3mm}

Note that some lines degenerate. For instance the point $(0_+,0_+)$ comprises the whole line; and the same is
true for $(\fty, \fty)$, which we abbreviate as $\fty$. Furthermore all top points $(q,\fty)$, $q\in \Q^+$ comprise
one line; and the same holds true for points $(q_+, \fty)$. Similar considerations apply to points on the right
boundary $(\fty, q)$ and $(\fty,q_+)$.

The topology on $\Zg_V$ is given (with respect to the addition in $\Q$) by basic open sets (rectangles)
$(a, a+g]\times (b, b+h]$, where $a, b$ are nonnegative rationals and $g, h$ are positive rationals or $\fty$.
Here a point $(J,I)$ belongs to this open set if it can be slided within the rectangle along the line, i.e.\ if
there is an equivalent pair $(L,K)$ such that  $a\notin v(L)$, $a+g\in v(L)$ and $b\notin v(K)$, $b+h\in v(K)$.

$$
\begin{xy}
(7,22)*={.}="A";(32,22)*={.}="B";%
(7,5)*+={\circ}="C";(32,5)*={.}="D";%
{\ar@{-} "A";"B"};{\ar@{-} "B";"D"};
{\ar@{.} "A";"C"};{\ar@{.} "C";"D"};
(0,0)*+={}="sy";(0,27)*+={}="ty";%
(0,0)*+={}="sx";(40,0)*+={}="tx";%
{\ar@{.>} "sx";"tx"};{\ar@{.>} "sy";"ty"};
(-4,20)*+={_{a+g}};(-2,6)*+={_a};%
(7,-2)*+={_b};(33,-2)*+={_{b+h}};%
(0,12)*+={}="L";(19,0)*+={}="K";(19,12)*+={\bullet}="Z";%
{\ar@{--} "L";"Z"};{\ar@{--} "K";"Z"};
(19,-3)*+={_K};%
(-3,12)*+={_L};%
(4,21)*+={}="bl";(34,3)*+={}="el";%
{\ar@{--} "bl";"el"};
\end{xy}
$$

\vspace{3mm}

For instance the point $(\sq 2, \sq 5)$ belongs to the open set $(2,3]\times (1,2]$ because $2< \sq 2+ 1 \leq 3$ and $1< \sq 5-1\leq 2$.

A word of caution: two sides of the rectangle are dashed, hence deleted, except of the two corner points. Thus on
the following diagram the line $l_1: x+y= 6$ intersects the rectangle $(2,4]\times (1,2]$, but the line
$l_2: x+y= 3$ does not.

$$
\begin{xy}
(10,20)*={.}="A";(30,20)*={\bullet}="B";%
(10,5)*+={\circ}="C";(30,5)*={.}="D";%
{\ar@{-} "A";"B"};{\ar@{-} "B";"D"};
{\ar@{.} "A";"C"};{\ar@{.} "C";"D"};
(25,24)*+={}="bl1";(40,12)*+={}="el1";%
(2,11)*+={}="bl2";(15,1)*+={}="el2";%
{\ar@{--}^{l_1} "bl1";"el1"};{\ar@{--}_{l_2} "bl2";"el2"};
(0,0)*+={}="sy";(0,25)*+={}="ty";%
(0,0)*+={}="sx";(40,0)*+={}="tx";%
{\ar@{.>} "sx";"tx"};
{\ar@{.>} "sy";"ty"};
(-3,20)*+={_{2}};(-3,6)*+={_1};%
(10,-2)*+={_2};(30,-2)*+={_{4}};%
\end{xy}
$$

\vspace{3mm}

Note the the infinite vertical strip $(1,2]\times (0,\fty]$ includes a finite point $(t,s)$ iff $t+s>1$.
Furthermore among infinite points it includes only top infinite points $(t,\fty), (t_+,\fty)$ and $\fty$.

$$
\begin{xy}
(0,0)*+={}="sy";(0,20)*+={.}="ty";%
(0,0)*+={\circ}="sx";(30,0)*+={.}="tx";(30,20)*+={.}="B";%
{\ar@{.>} "sx";"tx"};
{\ar@{.>} "sy";"ty"};
{\ar@{-} "ty";"B"};
{\ar@{-} "tx";"B"};
(-4,20)*+={_{\infty}};(32,-2)*+={_\infty};
(10,0)*+={}="1";(18,0)*={.}="2";(10,20)*={}="t1";(18,20)*={.}="t2";%
{\ar@{--} "1";"t1"};{\ar@{-} "2";"t2"};
(0,8)*+={}="bl";(10,0)*+={.}="el";{\ar@{--}^l "bl";"el"};
(11,-2)*+={_1};(19,-2)*+={_2};
\end{xy}
$$

\vspace{3mm}

On the following diagram we demonstrate the best attempt to separate points $a=(1,1_+)$, $b= (1_+,1_+)$,
$c= (1,1)$ and $d= (1_+,1)$.

$$
\begin{xy}
(0,0)*+={}="sy";(0,25)*={}="ty";%
(0,0)*+={\circ}="sx";(35,0)*={}="tx";%
{\ar@{.>} "sx";"tx"};
{\ar@{.>} "sy";"ty"};
(5,13)*={.}="A";(17,13)*={.}="B";(5,3)*={.}="C";(17,3)*={.}="D";%
{\ar@{-} "A";"B"};{\ar@{.} "C";"D"};{\ar@{.} "A";"C"};{\ar@{-} "B";"D"};
(17,23)*={.}="A1";(30,23)*={.}="B1";(17,13)*={.}="C1";(30,13)*={.}="D1";%
{\ar@{-} "A1";"B1"};{\ar@{.} "C1";"D1"};{\ar@{.} "A1";"C1"};{\ar@{-} "B1";"D1"};
(17,16)*={\circ};(15,17)*={_a};(20,16)*={\bullet};(22,17)*={_b};
(17,13)*={\bullet};(15,11)*={_c};(20,13)*={\circ};(21,11.5)*={_d};
(17.4,-2)*={_1};(-2,13)*={_1};
\end{xy}
$$

\vspace{3mm}

We see that the points $a, d$ are topologically indistinguishable and belong to the closures of both $b$ and $c$.

From this description it follows that $\Zg_V$ has a countable basis. We pinpoint another important property of
this basis. Following Hochster \cite{Hoch}, we call a topological space \emph{spectral} if it has a basis
consisting of compact open sets which is closed with respect to finite intersections (we drop the $T_0$-requirement
from the original definition).

\begin{lemma}\label{sober}
$\Zg_V$ is spectral.
\end{lemma}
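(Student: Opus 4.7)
The plan is to verify the two conditions of Hochster's definition: that $\Zg_V$ admits a basis of quasi-compact open sets that is closed under finite intersections. I take as basis the countable family of rectangles $(a,a+g]\times(b,b+h]$ (with $a,b$ nonnegative rationals and $g,h$ positive rationals or $\fty$) exhibited in the preceding paragraphs. Each such rectangle is a basic Ziegler open $(\phi/\psi)$ for suitable pp-formulae in one variable built from divisibility and annihilator atoms, and is therefore quasi-compact by the standard compactness theorem for pp-pairs (see \cite{Preb2}, Section~5).

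For closure under finite intersections, the key observation is that $O(R)$ associated to $R=(a,a+g]\times(b,b+h]$ is encoded by the cut-sum interval $(a+b, a+b+g+h]$, which records which classes of finite sum $c$ lie in $O(R)$, together with two labels `finite' or `$\fty$' attached to $g$ and $h$, which record the presence of the degenerate classes $(q,\fty)$, $(q_+,\fty)$, $(\fty,q)$, $(\fty,q_+)$ and $\fty$ itself. A direct sliding computation verifies that a finite-sum class with sum $c$ has a representative in $R$ iff $a+b<c\le a+b+g+h$, and that each degenerate class lies in $O(R)$ precisely when the corresponding edge of $R$ is labelled $\fty$. Intersecting two rectangles $R_1,R_2$ is then seen to correspond to intersecting the cut-sum intervals (giving the half-open interval $(\max(a_i+b_i),\min(a_i+b_i+g_i+h_i)]$) and to taking the edge-wise conjunction of the labels. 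One produces a fresh rectangle $R'$ realising the intersection by setting $a'+b'=\max(a_i+b_i)$, $a'+b'+g'+h'=\min(a_i+b_i+g_i+h_i)$, and labelling $g'$ (resp.\ $h'$) as $\fty$ exactly when both $g_i$ (resp.\ both $h_i$) are $\fty$.

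The main obstacle is the bookkeeping at the boundary edges of the rectangles: one must check, in each of the configurations that the four labels $g_1,h_1,g_2,h_2$ can take, that $R'$ can actually be chosen with admissible parameters and that the resulting $O(R')$ matches $O(R_1)\cap O(R_2)$ also on the degenerate classes. This is a short case analysis using that $g$ and $h$ may independently be finite or $\fty$. Once it is verified, the basis is closed under finite intersections, and Hochster's criterion yields the spectrality of $\Zg_V$.
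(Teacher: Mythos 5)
Your overall approach---rectangles, quasi-compactness as basic Ziegler-opens, and reduction to the diagonal projection intervals---is the same as the paper's (whose own proof is essentially just a picture of two bounded rectangles $U,V$ intersecting in a bounded $W$). Your added bookkeeping of cut-sum intervals together with `labels' for which of $g,h$ equal $\fty$ is a reasonable way to make the sliding argument explicit, and you are right that the labels are necessary: a rectangle with $g=\fty$, $h$ finite picks up $\fty_r$ among the infinite points but neither $\fty_u$ nor $\fty$.

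However, the final sentence of your proposal---that the closure under intersection is settled by ``a short case analysis''---waves away the one configuration where your own construction of a fresh rectangle $R'$ breaks down, and it is not clear it can be repaired. Take $R_1=(0,\fty]\times(0,1]$ (so $g_1=\fty$, $h_1$ finite) and $R_2=(0,1]\times(0,\fty]$ (so $g_2$ finite, $h_2=\fty$). By your own prescription, the cut-sum interval of $O(R_1)\cap O(R_2)$ is $(\max(a_i+b_i),\min(a_i+b_i+g_i+h_i)]=(0,\fty]$, while both labels come out \emph{finite} because $g_1,g_2$ are not both $\fty$ and $h_1,h_2$ are not both $\fty$. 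These two requirements are incompatible: a rectangle with both $g',h'$ finite has a bounded cut-sum interval. Concretely, $O(R_1)=\{\text{all finite points}\}\cup\{\fty_r\}$ and $O(R_2)=\{\text{all finite points}\}\cup\{\fty_u\}$, so $O(R_1)\cap O(R_2)$ is exactly the set of finite points. That set is covered by $\bigcup_n(0,n]$ with no finite subcover, hence is not quasi-compact, and in particular is not $O(R')$ for any rectangle $R'$. So the rectangle basis as given is \emph{not} closed under finite intersections, and the assertion you are outsourcing to a ``short case analysis'' is precisely where the work lies. Since $\fty_r$ lies only in basic opens with $g=\fty$ (and $\fty_u$ only in those with $h=\fty$), one cannot simply drop the mixed rectangles from the basis either: any basis of quasi-compact opens must contain opens isolating $\fty_r$ from $\fty_u$, and those intersect badly. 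Any correct proof therefore has to confront the mixed-$\fty$ case head-on (e.g.\ by arguing that some other compact-open basis exists, or by revisiting which pairs $(J,I)$ genuinely index points of $\Zg_V$); the bounded-rectangle picture alone, which is what the paper's diagram and your sketch both supply, does not establish the lemma.
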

\begin{proof}
A quick glance at the following diagram will convince the reader.

$$
\begin{xy}
(0,0)*+={}="sx";(60,0)*+{}="tx";(0,45)*+{}="ty";%
{\ar@{.>} "sx";"tx"};
{\ar@{.>} "sx";"ty"};
(22,40)*={.}="A";(40,40)*={.}="B";%
(22,20)*+={\circ}="C";(40,20)*={.}="D";%
{\ar@{-} "A";"B"};{\ar@{-} "B";"D"};
{\ar@{.} "A";"C"};{\ar@{.} "C";"D"};
(4,37)*={.}="a";(16,37)*={.}="b";%
(4,17)*+={\circ}="c";(16,17)*={.}="d";%
{\ar@{-} "a";"b"};{\ar@{-} "b";"d"};
{\ar@{.} "a";"c"};{\ar@{.} "c";"d"};
(40,14)*={.}="e";(50,14)*={.}="f";%
(40,8)*+={\circ}="g";(50,8)*={.}="h";%
{\ar@{-} "e";"f"};{\ar@{-} "f";"h"};
{\ar@{.} "e";"g"};{\ar@{.} "g";"h"};
(1,34)*+={}="bl1";(50,1)*+={}="el1";%
{\ar@{--} "bl1";"el1"};
(5,44)*+={}="bl2";(56,10)*+={}="el2";%
{\ar@{--} "bl2";"el2"};
(9,24)*+={_U};(32,32)*+={_V};(46,11)*+={_W};
(-4,16)*+={_{a_1}};(-4,22)*+={_{a_2}};(-4,36)*+={_{c_1}};(-4,40)*+={_{c_2}};
(5,-3)*+={_{b_1}};(16,-3)*+={_{d_1}};
(23,-3)*+={_{b_2}};(40,-3)*+={_{d_2}};
\end{xy}
$$

\vspace{3mm}

\noindent
Here the basic open set corresponding to the rectangle $W$ is the intersection of basic open sets for rectangles
$U$ and $V$.
\end{proof}

We can `flatten' the above diagrams by projecting all objects onto the diagonal $x=y$. Namely each line
$x+y= const$ will go to this constant. Then the rectangle $(a, a+g]\times (b, b+h]$ will get projected to
the half-open interval $(a+b, a+b+g+h]$.

$$
\begin{xy}
(0,0)*+={}="sy";(0,35)*+{}="ty";%
(0,0)*+={}="sx";(50,0)*+{}="tx";(46,31)*+{}="d";%
{\ar@{.>} "sx";"tx"};
{\ar@{.>} "sy";"ty"};
{\ar@{--} "sx";"d"};
(10,30)*={.}="A";(22,30)*={.}="B";(10,18)*={\circ}="C";(22,18)*={.}="D";%
{\ar@{-} "A";"B"};{\ar@{.} "C";"D"};{\ar@{.} "A";"C"};{\ar@{-} "B";"D"};
(15.5,10.5)*={\circ}="E";(29,19.5)*={\bullet}="F";
{\ar@{->} "C";"E"};{\ar@{->} "B";"F"};{\ar@{-} "E";"F"}
\end{xy}
$$

\vspace{3mm}

For instance on the previous diagram we obtain $U= (a_1+ b_1, c_1+ d_1]$ and $V= (a_2+ b_2, c_2+ d_2]$. Thus
$W$ corresponds to the interval $(a_2+ b_2, c_1+ d_1]$, though the sides of $W$ may vary.

Summing up the points of $\Zg_V$ are the following.

1) For each positive real $s$ the point $(s,s)$ which will be denoted by $s$.

2) For each nonnegative $q\in \Q$ the points $(q_+, q_+)$ which will be denoted by $q_+$.

We will call points from 1) and 2) \emph{ordinary}.

Furthermore for each $q\in \Q^+$ there is a pair of (topologically indistinguishable) points

3) $(q_+,q)$, denoted by $q_r$ ($r$ for right) and $(q,q_+)$ denoted by $q_u$ ($u$ for upper) - we call them
\emph{double points}.

Finally there are the following \emph{infinite points}.

5) $(\fty, \fty)$ which we denote by $\fty$.

6) The pair of topologically indistinguishable points $(\fty, 1)$, $(\fty, 1_+)$, denoted by $\fty_r$;
and a similar pair $(1, \fty)$, $(1_+, \fty)$, denoted by $\fty_u$.

The points of $\Zg_V$ are ordered in a natural way, extending the ordering of reals and setting  $q< q_r, q_u< q_+$
for every rational $q$. Furthermore we will put the infinite points $\fty_u, \fty_r$ after all finite points, but
before $\fty$. We will use black squares to denote the pairs of topologically indistinguishable points.

$$
\begin{xy}
(0,0)*+={_{\bullet}}="0";(22,0)*+{_{\bullet}}="s";(26,0)*+{_{\blacksquare}};(30,0)*+{_{\bullet}}="s+";%
{\ar@{.} "0";"s"};
(0,-3)*+={0^+};(22,-4.5)*+={s};(25,4)*+={s_{r,u}};(31,-5)*+={s_+};
(50,0)*+{_{\bullet}}="t";(54,0)*+{_{\blacksquare}};(58,0)*+{_{\bullet}}="t+";%
(50,-4.5)*+={t};(55,4)*+={t_{r,u}};(60,-5)*+={t_+};
{\ar@{.}"s+";"t"};
(86,0)*+={}="fty";(92,0)*+{_{\bullet}};(88,2)*+{_{\blacksquare}};(88,-2)*+{_{\blacksquare}};%
(88,6)*+={\fty_u};(88,-6)*+={\fty_r};(96,0)*+={\fty};
{\ar@{.} "t+";"fty"};
\end{xy}
$$

\vspace{3mm}

The topology on this space resembles Sorgenfrey's topology on the real line (see \cite[Exam.~4.6]{Will}) and
is given by the following open basis.

For each pair of nonnegative rationals $s< t$ there is a half open interval $(s,t]$ whose content is the closed
interval $[s^+,t]$ on the diagram. For instance, it includes $s^+$ but excludes $s$ and $s_{r,u}$.

Furthermore there are infinite intervals $(s,\fty_r)$ including all finite points to the right of $s_+$
(inclusive) and $\fty_r$, but neither $\fty_u$ nor $\fty$; and similarly for intervals $(s,\fty_u)$.

Finally there is an interval $(s,\fty)$ which includes all finite points to the right of $s_+$ and all infinite
points.

Thus one could consider intervals $(s,t]$ as above as basic open sets in the Ziegler spectrum of $V$.
The following trivial remark will be used later. Namely, the inclusion $(s,t]\seq \cup_{i=1}^n (s_i,t_i]$
of basic open sets holds true if and only if each of the following holds.

1) There exists $i$ such that $s_i\leq s< t_i$, i.e.\ the left end $s_+$ is covered.

2) There exists $j$ such that $s_j< t\leq t_j$, i.e.\ the right end $t$ is covered.

3) For each $u$ with $s< u< t$ there is a $k$ such that $s_k< u< t_k$, i.e.\ each internal point is covered.

An obvious adjustment is required for infinite intervals.

Thus in order to decide the above inclusion it suffices to resolve strict and non-strict inequalities between
$s, t$ and the $s_i, t_i$.

\section{B\'ezout domains}\label{S-Bezout}

We recall some facts from the model theory of modules over commutative B\'ezout domains. Here a domain $B$ is said
to be \emph{B\'ezout}, if every 2-generated (and hence finitely generated) ideal of $B$ is principal. Thus $B$ is
B\'ezout if and only if for every $0\neq a, b\in B$ there are $u, v, g, h\in B$ such that the following
\emph{B\'ezout equations} hold true:

$$
au+ bv= c \qquad \text{and}\qquad cg=a, \ ch= b\,. \leqno(1)
$$

Such an element $c$ is called a \emph{greatest common denominator} of $a$ and $b$, written $\gcd(a,b)$, and is unique
up to a multiplicative unit.

It is well known that the intersection of two principal ideals of a B\'ezout domain is a principal ideal. If
$aB\cap bB= dB$ then $d$ is said to be a \emph{least common multiple} if $a$ and $b$, written $\lcm(a,b)$,
which is unique up to a multiplicative unit.

Thus for $0\neq a, b\in B$, under a suitable choice of units, we obtain the equality $ab= \gcd(a,b)\cdot \lcm(a,b)$.
In particular a single generator $d$ for the ideal $(a:b)= \{r\in B\mid br\in aB\}$ can be calculated by the formula
$d= \lcm(a,b)/b$.

If $P$ is a prime ideal of $B$ then the localization $V= B_P$ is a valuation domain.

If $B$ is a B\'ezout domain, then $\mL_B$ will denote the first order language of $B$-modules. If $a\in B$ then
$a\mid x$ is the \emph{divisibility formula} whose value on a module $M$ equals the image of $a$, $Ma$. Similarly
$xb=0$ for $b\in B$ denotes the \emph{annihilator} formula whose value $(xb=0)(M)$ is the kernel of $b$.

We need the following fact.

\begin{fact}(see \cite[Lemma~2.3]{P-T15})\label{B-eli}
Each positive primitive formula $\phi(x)$ in $\mL_B$ is equivalent to a finite sum of formulae $a\mid x\wg xb=0$,
$a, b\in B$, and to a finite conjunction of formulae $c\mid x+ xd=0$, $c, d\in B$.
\end{fact}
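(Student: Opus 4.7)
The plan is to reduce an arbitrary pp-formula $\phi(x)$ in $\mL_B$ to a single conjunction $a\mid x \wg xb=0$ by a Smith-like matrix manipulation built on the B\'ezout identities (1); both claimed normal forms then follow essentially immediately.

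First write $\phi(x) \equiv \exists \bar y\, (M\bsm x \\ \bar y\esm = \bar 0)$ for some matrix $M$ over $B$. The key algebraic observation is that the B\'ezout identities supply $\mathrm{GL}_2(B)$-invertible transformations on pairs of rows or columns of $M$: given $a,b \in B$ with $au+bv = c$, $cg=a$, $ch=b$ as in (1), the computation $c(ug+vh) = u(cg) + v(ch) = au+bv = c$ forces $ug+vh = 1$ in the domain $B$, so the matrix $\bsm u & -h \\ v & g \esm$ is unimodular. Row operations correspond to equivalent recombinations of the defining equations, and column operations on the $\bar y$-block to unimodular substitutions of the existentially bound variables; both preserve the pp-formula $\phi(x)$.

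Iterate these operations to reduce the $\bar y$-block of $M$ until each $y_i$ appears in a single decoupled equation $\alpha_i x + d_i y_i = 0$, together with residual $y$-free equations $\beta_j x = 0$. Existentially eliminating each $y_i$ converts $\alpha_i x + d_i y_i = 0$ into the divisibility $d_i \mid \alpha_i x$, which a further B\'ezout step simplifies to $(d_i/\gcd(\alpha_i, d_i)) \mid x$. Combining divisibilities via $\lcm$ and annihilators via $\gcd$, using that $e_1 \mid x \wg e_2 \mid x$ is equivalent to $\lcm(e_1,e_2)\mid x$ and that $x\beta_1 = 0 \wg x\beta_2 = 0$ is equivalent to $x\gcd(\beta_1,\beta_2) = 0$ in every $B$-module, $\phi(x)$ collapses to the single conjunction $a \mid x \wg xb = 0$ for suitable $a, b \in B$.

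The first normal form is then immediate, since $a \mid x \wg xb = 0$ is itself a finite sum (with one summand) of formulae of the required shape. For the second, observe that $a \mid x$ agrees as a pp-formula with the sum $(a \mid x) + (x \cdot 1 = 0)$ since $M[1] = 0$ in every module $M$, and that $xb = 0$ agrees with $(0 \mid x) + (xb = 0)$ since $0M = 0$; hence $a \mid x \wg xb = 0$ is the conjunction of two formulae of the shape $c \mid x + xd = 0$. The main technical difficulty is executing the Smith-like reduction cleanly over a commutative B\'ezout domain that need not be an elementary divisor ring, and this is exactly the content of the cited \cite[Lemma~2.3]{P-T15}.
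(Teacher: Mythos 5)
Your central reduction collapses an arbitrary pp-formula in one variable to a \emph{single} formula $a\mid x\wg xb=0$, and this is simply false -- the Fact says ``finite sum'' precisely because one summand does not suffice. A concrete counterexample over $B=\Z$: the pp-formula $(p\mid x)+(xp=0)$ is not equivalent to any $a\mid x\wg xb=0$. (Evaluating on $\Z$ forces $b=0$ or $a\Z\cap\mathrm{ann}(b)=0$, while evaluating on $\Z/p\Z$ forces the opposite constraint.) The technical error sits in the elimination step: from $\alpha_i x + d_i y_i = 0$ one obtains $d_i\mid\alpha_i x$, and you claim this ``simplifies to $(d_i/\gcd(\alpha_i,d_i))\mid x$.'' That identity holds only in torsion-free modules. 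In general, writing $g=\gcd(\alpha_i,d_i)$ and $d_i'=d_i/g$, one has
$$
d_i\mid\alpha_i x \ \equiv\ (d_i'\mid x) \ + \ (xg=0),
$$
a \emph{sum}, not a divisibility. (Check on $\Z/2\Z$ with $d_i=4$, $\alpha_i=2$: $4\mid 2x$ holds for every $x$, but $2\mid x$ defines $\{0\}$.) Once each conjunct is a sum $c\mid x + xd=0$, you get the \emph{conjunction} normal form directly -- but you do not get a single $a\mid x\wg xb=0$, and the sum normal form requires a separate argument (in \cite{P-T15} the two normal forms are treated as dual to each other via elementary duality, not derived one from the other by your trivial observation).

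There is a second, independent gap which you flag but do not address: the decoupling of the $\bar y$-block into equations $\alpha_i x+d_i y_i=0$, one $y_i$ per equation, is a Smith-normal-form statement and requires $B$ to be an elementary divisor ring. A general commutative B\'ezout domain admits only a Hermite (triangular) form, under which the $y_i$'s remain coupled. Noting that ``this is exactly the content of the cited lemma'' does not discharge the obligation -- it concedes that the key step of your argument is being imported rather than proven. In short: the proposed proof establishes neither normal form, because its target (a single $a\mid x\wg xb=0$) is unattainable, and its main reduction step both rests on an unavailable Smith form and uses a false pp-identity.
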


The following is a standard consequence of Fact~\ref{B-eli}.

\begin{cor}\label{zig-basis}
Let $B$ be a B\'ezout domain. An open basis for $\Zg_B$ is given by pairs $(\phi/\psi)$, where
$\phi\doteq a\mid x\wg xb=0$, $0\neq a, b\in B$, and $\psi\doteq c\mid x+ xd=0$, $0\neq c, d\in B$.
\end{cor}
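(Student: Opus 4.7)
The plan is to derive the statement directly from Fact~\ref{B-eli} together with two standard distributive identities for Ziegler basic opens. By the defining presentation of $\Zg_B$ an open basis is already supplied by arbitrary pp-pairs $(\phi/\psi)$; my task is therefore to rewrite each such pair as a finite union of pairs of the special shape described in the corollary.

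The two identities I would invoke are the following. First, if $\phi = \phi_1 + \phi_2$ then $(\phi/\psi) = (\phi_1/\psi)\cup(\phi_2/\psi)$: a module $M$ lies in the left-hand side iff some element of $\phi_1(M)+\phi_2(M)$ lies outside the subgroup $\psi(M)$, and because $\psi(M)$ is a subgroup at least one summand must already lie outside it. Dually, if $\psi = \psi_1\wg\psi_2$ then $(\phi/\psi) = (\phi/\psi_1)\cup(\phi/\psi_2)$, since an element of $\phi(M)$ escapes $\psi_1(M)\cap\psi_2(M)$ iff it escapes at least one of $\psi_1(M)$ and $\psi_2(M)$. Both identities follow at once from the set-theoretic definition of $(\phi/\psi)$ together with the fact that $\phi(M)$ and $\psi(M)$ are subgroups of $M$.

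With these in hand I would apply Fact~\ref{B-eli} to write $\phi \equiv \sum_{i} \phi_i$ with each $\phi_i$ of the form $a_i\mid x \wg xb_i = 0$, and $\psi \equiv \bigwedge_{j} \psi_j$ with each $\psi_j$ of the form $c_j\mid x + xd_j = 0$. Iterating the two identities above yields $(\phi/\psi) = \bigcup_{i,j}(\phi_i/\psi_j)$, a finite union of basic opens of the required shape, so pairs of this shape constitute a basis. The only bookkeeping concerns the degenerate summands in which a vanishing coefficient makes $\phi_i$ equivalent to $x=0$: those pairs contribute the empty open and can be dropped, while the remaining ones can be retained with all parameters nonzero. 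No real obstacle is expected, since the corollary is a purely formal consequence of Fact~\ref{B-eli} combined with how $(\phi/\psi)$ behaves on sums and conjunctions.
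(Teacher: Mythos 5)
The paper gives no proof of this corollary (it is labelled ``a standard consequence of Fact~\ref{B-eli}''), so your strategy---decompose $\phi$ as a sum and $\psi$ as a conjunction via Fact~\ref{B-eli} and distribute through the Ziegler pair---is exactly the intended one, and both of your distributive identities $(\phi_1+\phi_2/\psi)=(\phi_1/\psi)\cup(\phi_2/\psi)$ and $(\phi/\psi_1\wg\psi_2)=(\phi/\psi_1)\cup(\phi/\psi_2)$ are correct.

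The gap is in your final bookkeeping paragraph. You assert that ``a vanishing coefficient makes $\phi_i$ equivalent to $x=0$,'' but that is only true for $a_i=0$. When $b_i=0$ the formula $a_i\mid x\wg x\cdot 0=0$ is equivalent to the divisibility formula $a_i\mid x$, not to $x=0$, and the corresponding pair need not be empty; dually, $c_j=0$ makes $\psi_j$ equivalent to the annihilator formula $xd_j=0$ (while $d_j=0$ trivialises $\psi_j$ and those pairs really can be dropped). These degenerate pairs cannot in general be replaced by pairs with all four parameters nonzero: for instance the quotient field $Q$ of $B$, being divisible and torsion-free, satisfies $(a\mid x\wg xb=0)(Q)=Q\cap 0=0$ whenever $a,b\neq 0$, so $Q$ lies in no pair of the strict form, yet $Q\in(a\mid x/\,xd=0)$ for any $a,d\neq 0$, and this open set is proper. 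So your argument establishes the (weaker, but perhaps in practice sufficient) statement in which $a,b,c,d$ are allowed to be arbitrary elements of $B$; passing to the strictly nonzero form requires either an extra argument or a reading of the corollary that tolerates the degenerate pairs.
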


We will recall from \cite{P-T15} how to decide when the above basic open set is non-empty. Firstly we may replace
$\phi$ by $\phi+ \psi$ without changing the open set. On the level of elements that means taking $\gcd$'s
and $\lcm$'s, hence we can assume that $c= ga$ and $b= dh$ for some $g, h\in B$. Now this open set is empty if
and only if $g$ and $h$ are coprime, $\gcd(g,h)=1$.

Now we look at this pair as follows. If $P$ is a nonzero prime ideal of $B$ such that $gh\notin P$ then the
open set $(\phi/\psi)_P$ defined by this pair over the valuation domain $V= B_P$ is trivial. Otherwise
(see Section~\ref{S-val}) we may consider this pair as a `rectangle' $(d,b]\times (a,c]$ (or rather
$(v(d),v(b)]\times (v(a),v(c)]$) on the plane $\wh\Ga^+\times \wh\Ga^+$, where $\wh\Ga^+$ is the positive cone of
$V$ completed by cuts. By `projecting onto the diagonal' (see Section~\ref{S-val}) we represent this open set
by a half-open interval $(v(ad), v(bc)]$.

Following \cite{PPT} and \cite[Ch.~17]{Preb1} we will consider the following setup for analyzing decidability
of the theory of modules.

A countable B\'ezout domain $B$ is said to be \emph{effectively given}, if its elements can be listed as
$a_0= 0$, $a_1=1$, $a_2$, $\dots$ (possibly with repetitions) such that the following algorithms can be executed effectively (when $m$ and $n$ range over nonnegative integers).

1) Deciding whether $a_m= a_n$ or not.

2) Producing $a_m+ a_n$ and $a_m\cdot a_n$ (or rather indices of these elements in the list).

3) Establishing whether $a_m$ divides $a_n$.

In fact this notion is equivalent to a standard definition of a computable algebra (see \cite[Ch.~6]{Ersh} or
\cite{Rab}).

Despite the countability assumption seems to be too restrictive in some cases, it means just the countability
of the first order language $\mL_B$. Furthermore if $B$ is written as a list, then questions 1)--3) are interpretable
in the first order theory of $B$-modules, hence must be answered effectively.

Under these assumptions the standard list of axioms for the theory of $B$-modules is recursive, hence this theory is
recursively enumerated. Furthermore the following procedures can be carried out effectively.

4) Given $a_m$ producing $-a_m$.

5) Given $a_m, a_n$ such that $a_n$ is not zero and $a_m$ divides $a_n$ producing the quotient $a_n/a_m$.

6) Given $a_m$ deciding whether $a_m$ is a unit in $B$ and, if so, computing its inverse.

7) Given nonzero $a_m, a_n$, producing a representative of their greatest common divisor and a representative
of their least common multiple.

Recall that the \emph{prime radical relation} $a\in \rad(b)$ on a commutative ring $R$ is equivalent to $a^n\in bR$
for some $n\in \N$, so in general it is not first order in the language of rings.

\begin{lemma}\label{rad-rel}
Suppose that $B$ is a B\'ezout domain such that each nonzero prime ideal is maximal. Then the principal radical
relation is first order in the language of rings.

Furthermore if $B$ is effectively given, then $a\in \rad(b)$ can be decided effectively.
\end{lemma}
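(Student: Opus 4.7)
The plan is to exhibit a first-order formula in the pure language of rings that defines $a\in \rad(b)$ under the given hypothesis, and then to turn that formula into an effective decision procedure.

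First I would reformulate $a\in \rad(b)$ ideal-theoretically: every prime of $B$ containing $b$ must contain $a$. Since $B$ is B\'ezout, coprimality $cB+xB=B$ is first-order, witnessed by a B\'ezout equation $cu+xv=1$, and two elements of $B$ are coprime precisely when no prime contains both. These observations motivate the candidate equivalence
$$
a\in \rad(b)\ \Leftrightarrow\ \fa c\,\bigl[\,\ex u\,\ex v\,(cu+av=1)\ \Ra\ \ex u'\,\ex v'\,(cu'+bv'=1)\,\bigr],
$$
supplemented by the trivial clause $b=0\Ra a=0$ to handle the degenerate case (since $\rad(0)=0$ in a domain).

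The $(\Ra)$ direction is essentially formal: any prime containing both $c$ and $b$ would contain $a$ (by $a\in \rad(b)$), contradicting coprimality of $c,a$; so $c$ and $b$ share no prime and are coprime. The converse, where both the work and the use of the hypothesis lie, is the main obstacle. Assuming $a\notin \rad(b)$ with $b\ne 0$, I would pick a prime $P$ of $B$ with $b\in P$ and $a\notin P$. Since $b\ne 0$, the prime $P$ is nonzero, hence maximal by hypothesis, and $B/P$ is a field. Inverting $a$ modulo $P$ yields $v\in B$ with $av\equiv 1\pmod{P}$; then $c:=1-av$ satisfies $c\cdot 1+a\cdot v=1$ (so $c$ and $a$ are coprime) and $c\in P$ (so $c,b$ both lie in $P$ and are not coprime), contradicting the universal hypothesis. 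The existence of this witness $c$ depends crucially on $P$ being maximal: without the dimension-one assumption the construction collapses.

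For the effective statement, assume $B$ is effectively given. Dispose of $b=0$ by testing whether $a=0$. For $b\ne 0$, run two semi-decision procedures in parallel: procedure (A) tests, for $n=1,2,\dots$, whether $b\mid a^n$ using the divisibility algorithm, halting iff $a\in \rad(b)$; procedure (B) enumerates $c\in B$ and, using effective gcd, searches for $c$ coprime to $a$ but not to $b$, halting iff $a\notin \rad(b)$ by the first-order equivalence just proved. Exactly one procedure terminates, yielding the decision.
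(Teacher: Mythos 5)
Your proof is correct and follows essentially the same route as the paper: the same first-order equivalence (your B\'ezout-equation formula is precisely the paper's $\forall x\,(\gcd(a,x)=1\to\gcd(b,x)=1)$ written out explicitly), the same witness construction from a maximal prime $P$ with $b\in P$, $a\notin P$, and the same pair of parallel semi-decision procedures for effectiveness. Your extra clause for $b=0$ patches a degenerate case the paper silently ignores (there the prime containing $b$ but not $a$ could be the zero ideal, hence not maximal), though in the paper's application the lemma is invoked only for nonzero $a,b$.
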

\begin{proof}
We prove that this relation is equivalent to the first order statement $\fa\, x\, \gcd(a,x)=1\to \gcd(b,x)=1$ and check only the
right to left implication.

Otherwise $a\notin \rad(b)$, hence $a\notin P$ for some prime ideal containing $b$. By the assumption $P$ is maximal,
hence $ar+ x= 1$ for some $x\in P$ and $r\in B$. It follows that $\gcd (a,x)= 1$ but $b$ and $x$ are not coprime, a
contradiction.

To prove effectiveness make a list of powers $a, a^2, \dots$ and check at each step, using 3), whether $a^n\in bB$
or not. If this is the case stop with the answer $a\in \rad(b)$. Otherwise continue.

On the other hand for each $x$ that occurs so far in the list check, using 7), whether $\gcd (a,x)=1$.
If this is not the case then continue, otherwise check whether $\gcd (b,x)= 1$. If not then stop with the
$a\notin \rad(b)$ answer, otherwise continue.

Clearly these two parallel processes will provide the answer after finitely many runs.
\end{proof}

Now we are in a position to prove the main theorem.

\begin{theorem}\label{main}
Suppose that $B$ is an effectively given B\'ezout domain such that each nonzero prime ideal $P$ is maximal, the
residue field $B/P$ is infinite and the maximal ideal of the localization $B_P$ is not finitely generated. Then
the theory $T$ of all $B$-modules is decidable.
\end{theorem}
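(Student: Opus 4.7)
The overall strategy is local-global: we reduce decidability of $T$ to a finite number of effective interval-covering tests on the value groups of the localizations $B_P$.

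By the Baur--Monk reduction (see \cite[Ch.~17]{Preb1}) combined with Corollary~\ref{zig-basis}, it suffices to exhibit an algorithm that, given a pair $(\phi/\psi)$ with $\phi \doteq a\mid x \wg xb = 0$, $\psi \doteq c\mid x + xd = 0$ and a finite family of similar pairs $(\phi_i/\psi_i)$, decides whether $(\phi/\psi) \seq \bigcup_i (\phi_i/\psi_i)$ in $\Zg_B$. By the normalization recalled just after Corollary~\ref{zig-basis}, we may assume $c = ga$, $b = dh$, and likewise $c_i = g_i a_i$, $b_i = d_i h_i$. At each nonzero prime $P$ of $B$, the hypotheses force $B_P$ to be a valuation domain with infinite residue field whose value group $\Ga_P$ is archimedean (because every nonzero prime of $B$ is maximal, so $PB_P$ is the unique nonzero prime of $B_P$) and dense (because the maximal ideal of $B_P$ is not finitely generated); since $B_P$ is countable, $\Ga_P \cong (\Q,+,\leq)$ and the description of $\Zg_{B_P}$ from Section~\ref{S-val} applies verbatim. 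Thus $(\phi/\psi)_P$ is trivial unless both $g,h\in P$, in which case the diagonal projection of Section~\ref{S-val} identifies it with the half-open interval $(v_P(ad), v_P(bc)]$. By the transfer between $\Zg_B$ and the family $\{\Zg_{B_P}\}$ established in \cite{P-T15}, the sought global inclusion is equivalent to the conjunction, over every nonzero prime $P$ of $B$, of the corresponding local interval inclusion.

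The main obstacle is then to reduce this a priori infinite system of local tests to a finite effective one. The outcome at $P$ depends only on two finite pieces of data: (i) which of the elements $g,h,g_i,h_i$ lie in $P$; and (ii) the linear ordering of the valuations $v_P(ad), v_P(bc), v_P(a_i d_i), v_P(b_i c_i)$. By the B\'ezout calculus of Section~\ref{S-Bezout}, each weak inequality $v_P(u)\leq v_P(v)$ is equivalent to the membership condition $\lcm(u,v)/v\notin P$, and equality of valuations corresponds to the conjunction of two such conditions. Hence each possible local configuration is encoded by a Boolean condition of the form ``the explicit elements $\al_1,\dots,\al_j$ lie in $P$ and the explicit elements $\be_1,\dots,\be_l$ do not'', and only finitely many such \emph{types} arise from the input data.

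Given such a type $\tau$, realizability by an actual maximal ideal of $B$ reduces---since every nonzero prime of $B$ is maximal---to a condition of the form $\be_1\cdots\be_l \notin \rad(\gcd(\al_1,\dots,\al_j))$, which is decidable by Lemma~\ref{rad-rel}. Validity of the local interval inclusion under the orderings prescribed by $\tau$ is the finite combinatorial endpoint/interior-point check stated at the end of Section~\ref{S-val}, applied to intervals in $(\Q,+,\leq)$. The global inclusion then holds if and only if it holds under every realized type, and assembling these effective procedures yields the required algorithm.
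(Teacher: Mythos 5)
Your proposal is correct and follows essentially the same route as the paper: Baur--Monk reduction to invariants, normalization of the pp-pair data via Fact~\ref{B-eli}, localization at nonzero primes $P$ where $\Gamma_P\cong(\Q,+,\leq)$, translation to interval inclusion and the endpoint/interior test from Section~\ref{S-val}, encoding of valuation (in)equalities as membership conditions in $P$ via $\lcm$-quotients, and finally Lemma~\ref{rad-rel}. The one cosmetic difference is that you explicitly enumerate the finitely many ``types'' of membership patterns and check each realized one, whereas the paper phrases the last step contrapositively as deciding whether there exists a prime $P$ at which the interval inclusion fails and then collapses the resulting finite include/exclude conditions into a single radical test $a\notin\rad(b)$; these are logically the same step, and your version merely makes the bookkeeping a little more explicit.
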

\begin{proof}
By Baur--Monk theorem (which eliminates quantifiers down to $\fa\, \ex$-level, see \cite[Sect.~2.4]{Preb2})
each sentence in $T$ is equivalent to a Boolean combination of the so-called invariant sentences.
Clearly (just knowing that this equivalence exists) this elimination procedure can be executed effectively.

Furthermore both equivalent forms from Lemma \ref{B-eli} can be found effectively (just knowing that they exist).
Because each residue field of $B$ is infinite, standard arguments (see \cite[Sect.~5]{PPT}) show that it suffices
to decide effectively the following inclusion of basic open sets:

$$
(\phi/\psi)\seq \bigcup_{i=1}^n (\phi_i/\psi_i)\,, \leqno (2)
$$

\noindent where $\phi\doteq a\mid x\wg xb=0$, $\psi\doteq c\mid x+ xd=0$, $\phi_i\doteq a_i\mid x\wg xb_i=0$,
$\psi_i\doteq c_i\mid x\wg xd_i=0$ and all elements are from $B$. Furthermore (see \cite[before Prop.~3.2]{P-T15})
we may assume that $c= ag$, $b= dh$ and $c= a_i g_i$ and $b_i= dh_i$ for some elements in $B$.

Now this inclusion clearly localizes: it holds true over $B$ if and only if it is true over any localization $B_P$,
where $P$ is a prime nonzero ideal. By the assumption the value group of each localization is countable,
dense and archimedean, hence isomorphic to $(\Q,+,\leq)$.

According to Section \ref{S-val} we will consider (2) as the following inclusion of half-open intervals:

$$
(v(ad), v(bc)]\seq \bigcup_{i=1}^n (v(a_id_i), v(b_i c_i)]\,. \leqno (3)
$$

We will decide on existence of a prime ideal $P$ such that this inclusion fails. For this we use the explanation
after Corollary \ref{zig-basis}.

First of all the interval $(v(ad), v(bc)]$ should be nontrivial over $B_P$. That means just that $g, h\in P$.
Thus we have gotten a first condition in our projected $P$.

Furthermore (see Section \ref{S-val}) the remaining conditions can be written in the form $v(s)< v(t)$ or
$v(s)\leq v(t)$ where $s, t$ take values among $ad, bc, a_id_i, b_ic_i$. Let $u$ generate the ideal $(t:s)$ and
$w$ generate the ideal $(s:t)$. Then the former equality $v(s)< v(t)$ is equivalent to $u\in P$, and the latter
is equivalent to $w\notin P$.

Summing up we see that the inclusion (3) does not hold if and only if there exists a prime nonzero ideal $P$
which includes a prescribed finite set of elements of $B$, and excludes a prescribed finite set of elements.
Using $\gcd$'s and $\lcm$'s we may assume that both sets consist of one element.

Thus we are left with the following question. Given nonzero $a, b\in B$ decide on the existence of a prime ideal
$P$ such that $a\notin P$ and $b\in P$. But this is the same as $a\notin \rad(b)$, hence Lemma~\ref{rad-rel} applies.
\end{proof}

Struggling more one could likely relax the hypotheses of Theorem \ref{main} to include the case when some residue
fields $B/P$ are finite. However because of the lack of nice examples, we will not pursue this goal.

Here is a modeling example when the situation described in Theorem \ref{main} occurs. Let $A$ be a domain
with quotient field $Q$ whose algebraic closure is denoted by $\wt Q$. Then the \emph{algebraic closure}
$\wt A$ of $A$ is its integral closure in $\wt Q$.

\begin{prop}\label{effect}
Suppose that $A$ is a countable domain such that each nonzero prime ideal of $A$ is maximal and its algebraic
closure $B$ is an effectively given B\'ezout domain. Then the theory of all $B$-modules is decidable.
\end{prop}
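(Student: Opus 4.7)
The plan is to verify the three remaining hypotheses of Theorem~\ref{main} for $B$, then invoke it: (a) every nonzero prime ideal of $B$ is maximal; (b) each residue field $B/P$ is infinite; (c) the maximal ideal of $B_P$ is not finitely generated. The common ingredient for all three is the following closure property of $B=\wt A$: any root of a monic polynomial in $B[x]$ lives in $\wt Q$ and is integral over $B$, hence over $A$ by transitivity of integrality, hence lies in $\wt A = B$. In particular, $B$ contains square roots of all its elements.

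For (a), since $B$ is integral over $A$, the lying-over and incomparability theorems give that a prime $P$ of $B$ is maximal iff $P\cap A$ is maximal in $A$. Given a nonzero prime $P$ of $B$ and $0\neq b\in P$, the constant term of an integral relation $b^n + a_{n-1}b^{n-1} + \cdots + a_0 = 0$ chosen of minimal degree cannot vanish (else $b$ factors out in the domain $B$, contradicting minimality), so $0\neq a_0\in P\cap A$; thus $P\cap A$ is a nonzero prime of $A$, hence maximal by hypothesis, so $P$ is maximal. For (b), I would show the stronger statement that $B/P$ is algebraically closed (which is infinite for free): given a monic $\bar f \in (B/P)[x]$, lift coefficients to a monic $f \in B[x]$; the closure property supplies a root $\alpha \in B$, polynomial division by the monic $x-\alpha$ stays inside $B[x]$, and iterating yields a complete factorization $f = \prod_i (x-\alpha_i)$ in $B[x]$ whose reduction mod $P$ splits $\bar f$.

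For (c), $B_P$ is a valuation domain because $B$ is B\'ezout, so inside $B_P$ being finitely generated is equivalent to being principal. Suppose for contradiction that $PB_P = \alpha B_P$ with $0 \neq \alpha \in P$, and pick $\beta \in B$ with $\beta^2 = \alpha$; then $\beta \in P$ because $P$ is prime. Writing $\beta = \alpha u/s$ with $u \in B$ and $s \in B\sm P$, the equality $s\beta = \beta^2 u$ and cancellation of the nonzero $\beta$ in the domain $B$ give $s = \beta u \in P$, contradicting $s\notin P$. Hence $PB_P$ is not principal, and Theorem~\ref{main} applies to deliver decidability of the theory of all $B$-modules. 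I do not foresee any real obstacle; the only step that genuinely uses $B = \wt A$ rather than a generic integral extension of $A$ is the extraction of roots of monic polynomials employed in (b) and (c).
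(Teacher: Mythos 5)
Your proof is correct and follows the same overall plan as the paper: verify the three remaining hypotheses of Theorem~\ref{main} and invoke it. Parts (a) and (c) match the paper's reasoning (the paper cites Atiyah--Macdonald for (a) rather than spelling out lying-over and the nonzero-constant-term trick, and for (c) it uses exactly your square-root cancellation, phrased as $1=\sqrt{r}\cdot t$). The genuine difference is part (b). The paper splits into characteristic cases: in characteristic $p$ it notes $B$ contains every $\F_{p^m}$, which injects into $B/P$; in characteristic $0$ it observes $B\supseteq \wt\Z$ and runs a cyclotomic argument (ramification of $p$ in $\Z[\th_{p^k-1}]$) to produce arbitrarily large finite subfields of $B/P$. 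Your observation that $B/P$ is actually algebraically closed --- monic polynomials over $B/P$ lift to monic polynomials over $B$, which split completely in $B$ by integrality and transitivity --- is cleaner, uniform in the characteristic, and proves a strictly stronger statement while needing no number-theoretic input. It is a real improvement on the paper's treatment of this step, even though the global architecture of the proof is the same.
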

\begin{proof}
We will check that the hypotheses of Theorem \ref{main} are satisfied.

Firstly it follows from standard properties of integral closures (see \cite[Cor.~5.8, 5.9]{A-M}) that each
nonzero prime ideal of $B$ is maximal.

Secondly we check that the maximal ideal $P_P$ of $B_P$ is not principal. Otherwise let the image of $r\in P$
generate this ideal. Since $\sq r\in B$ we conclude that $\sq r= rt$ for some $t\in R_P$, hence $1= \sq r\cdot t$,
a contradiction.

It remains to show that each residue field $B/P$ is infinite. If the characteristic of $A$ is $p$ then $B$
includes a copy of any finite field $\F_q$, $q= p^m$, which survives after the localization.

Otherwise $A$ contains a copy of $\Z$, therefore $B$ contains the ring of algebraic integers $\Alg= \wt\Z$.
It is well known that each residue field of this ring is infinite. For instance one can argue as follows.

Let $P$ be a maximal ideal of $\Alg$, hence $P$ contains a rational prime $p$. Let $m= p^k-1$ and let $\th_m$
denote the $m$th primitive root of unity. Then $\Z[\th_m]$ is the ring of algebraic integers in the field
$\Q[\th_m]$ and (see \cite[p.~52, Exer.~5]{Jan}) $p$ completely ramifies in $\Z[\th_m]$.

Furthermore if $p\Z[\th_m]= M_1\cdot\ldots\cdot M_l$ for maximal ideals $M_i$, then each field $\Z[\th_m]/M_i$
contains $p^k$ elements, hence the same is true for $B/P$. Since $k$ is arbitrary, we obtain the desired.
\end{proof}

If $A$ is completely integrally closed in its quotient field, then some necessary conditions for $\wt A$ to be
B\'ezout are pointed out in \cite[Prop.~5.2]{D-M}. For instance $A$ should be Pr\"ufer with torsion Picard group.
The latter means that for each finitely generated (and hence projective) ideal $I$ of $A$ some power $I^n$ ($n$ a
positive integer) is a principal ideal. For instance (see \cite[Exam.~5.3]{D-M}) if a field $F$ is not algebraic
over a finite field and $A= F[t]$, then the ring $\wt A$ is not B\'ezout.

On the other hand if $B= \wt A$ is a union of Pr\"ufer domains $B_i$ with torsion Picard groups then it is B\'ezout.
Namely if nonzero $a, b$ are in the same $B_i$, then choose $n$ such that the ideal $(aB_i+bB_i)^n$ of $B_i$ is
principal generated by $c$. Then $d= \sq[n] c$ generates $aB+ bB$. This clearly applies to $\Alg$ and also to the
algebraic closure of the polynomial ring $\F_p[t]$, as well as their localizations. Furthermore (see \cite[5.5]{D-M})
it works if $A$ is the localization $\wt \Q[t]_{(t)}$ of $\wt \Q [t]$ with respect to the ideal $t\,\wt \Q [t]$.

To apply Proposition \ref{effect} (to prove decidability) we need the effectiveness of the algebraic closure of
$A$, which is a subtle question. If $A$ is effectively given then the same obviously holds for its quotient ring
$Q$. As follows from \cite[Thm.~7]{Rab} the algebraic closure $\wt Q$ is also effectively given. Actually the
construction of this paper creates a new one-to-one numeration for $\wt Q$ such that even recovering $Q$ inside
this field is not straightforward. Namely by \cite[Lemma~6]{Rab} this is the case if and only if $Q$ possesses a
splitting algorithm (for polynomials in one variable).

On the basis of this we obtain.

\begin{prop}\label{rub}
Suppose that $A$ is a countable effectively given domain with the quotient field $Q$ which possesses a splitting
algorithm for polynomials in one variable. If the algebraic closure $B$ of $A$ is B\'ezout, then the  theory of
all $B$-modules is decidable.
\end{prop}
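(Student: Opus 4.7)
The plan is to verify the hypotheses of Proposition \ref{effect}; since $A$ is countable by assumption (and, as in the modeling examples preceding the proposition, we take the condition that every nonzero prime ideal of $A$ is maximal to be tacit), the whole burden is to show that $B = \wt A$ is effectively given as a B\'ezout domain. Effective givenness of $A$ propagates to the field of fractions $Q$, represented as pairs $(a,b)$ with $b \neq 0$ modulo the (decidable) cross-multiplication equivalence, with effective arithmetic.

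Since $Q$ admits a splitting algorithm for polynomials in one variable, \cite[Thm.~7]{Rab} supplies $\wt Q$ with an effective one-to-one numeration on which equality, ring operations, and factorization of polynomials are all decidable. In particular the $Q$-minimal polynomial $m_\alpha(x)$ of any $\alpha \in \wt Q$ can be computed. I then identify $B$ inside $\wt Q$ by the criterion: $\alpha \in B$ iff every coefficient of $m_\alpha$ is integral over $A$. Passing from $A$ to its integral closure $A'$ in $Q$ does not change $B$, and $A'$ can itself be produced effectively from $A$ using the splitting algorithm (as in \cite[Lem.~6]{Rab}); after this replacement, integrality over $A$ of an element of $Q$ is equivalent to decidable membership in $A'$. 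Consequently membership in $B$ is decidable, ring operations on $B$ are inherited from $\wt Q$, and divisibility is decided by $a \mid b$ iff $b/a \in B$.

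Finally, since $B$ is B\'ezout by hypothesis, $\gcd(a,b)$ can be computed by enumerating pairs $(u,v) \in B \times B$ until one is found for which $c := au + bv$ divides both $a$ and $b$; termination is guaranteed by the existence of a B\'ezout representation, and $\lcm(a,b) := ab/\gcd(a,b)$ completes the effective B\'ezout data on $B$. Proposition \ref{effect} then yields the decidability of the theory of all $B$-modules. The main obstacle I expect to be the effective membership test for $B$ inside $\wt Q$ --- in particular, the effective construction of the integral closure of $A$ in $Q$ --- and this is precisely the point at which the splitting-algorithm hypothesis is indispensable via Rabin's results.
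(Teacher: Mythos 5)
Your route via Rabin \cite[Thm.~7]{Rab} to effectivize $\wt Q$ is exactly the paper's. Where you diverge is in how $B$ is recognized inside $\wt Q$: the paper simply \emph{enumerates} $B$ by running through monic polynomials over $A$ and collecting their roots in $\wt Q$, whereas you attempt a membership \emph{decision}: compute the $Q$-minimal polynomial $m_\alpha$ and test whether all its coefficients lie in the integral closure $A'$ of $A$ in $Q$. Your algebraic criterion is correct, but the claim that $A'$ is effectively computable ``as in \cite[Lemma~6]{Rab}'' is a genuine gap: that lemma is about recovering the subfield $Q$ inside $\wt Q$ via a splitting algorithm and says nothing about computing the integral closure of a subring of $Q$. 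Deciding whether a given $q\in Q$ is integral over $A$ is a priori only semi-decidable (search for a monic relation), and the splitting algorithm for $Q[x]$ does not obviously provide a way to refute integrality. So as written the membership test for $B$, and hence the decidability of divisibility $a\mid b$ (i.e.\ of $b/a\in B$), is not justified for arbitrary $A$.

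To be fair, the paper's own proof is also terse on this point --- it asserts that operations on $B$ are effective because they are in $\wt Q$, but divisibility in $B$ reduces to exactly the same membership question. The issue evaporates for the cases the proposition is actually invoked for ($A=\Z$ and $A=\F_p[t]$): there $A$ is a PID, hence $A'=A$ and membership in $A$ inside $Q$ is decidable from the effective presentation of $A$. Two smaller remarks: your suggestion to compute $\gcd$'s by enumerating B\'ezout pairs $(u,v)$ with $c=au+bv$ dividing $a$ and $b$ is a nice addition the paper does not spell out, though it also leans on the $B$-membership test; and you are right that the hypothesis that every nonzero prime of $A$ is maximal must be assumed tacitly to feed Proposition~\ref{effect}, since it is missing from the statement of Proposition~\ref{rub} as printed.
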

\begin{proof}
By Proposition \ref{effect} it suffices to create an effective list for elements of $B$.

Start with an effective list of elements of $A$. Clearly this extends to an effective list of elements of $Q$.
As in \cite{Rab} create an effective list of the algebraic closure of $Q$ such that $Q$, and hence $A$ can be effectively
recovered within this list.

Make a list of monic polynomials $f(x)$ in one variable with coefficients in $A$. For a parallel list of elements 
$a\in A$ check whether $f(a)= 0$. If this holds true, include $a$, otherwise drop $f$ and continue.

This way we will create a list of elements of $B$. The operations on $B$ are effective, because they are effective in
$\wt Q$.
\end{proof}

Now we are in a position to analyze examples of our original interest.

\begin{theorem}\label{a-decid}
1) The theory of all modules over the ring $\Alg$ of algebraic integers in decidable.

2) The same is true for the algebraic closure of the polynomial ring $\F_p[t]$ ($p$ a prime).
\end{theorem}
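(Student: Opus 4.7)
The plan is to deduce both statements from Proposition \ref{rub} by exhibiting, in each case, a countable effectively given base domain $A$ whose quotient field admits a splitting algorithm for polynomials in one variable and whose algebraic closure is the ring in question (and is known to be B\'ezout).

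For part 1), I would take $A = \Z$, so $\wt A = \Alg$. Then $\Z$ is countable and effectively given, and every nonzero prime of $\Z$ is maximal. Its quotient field $\Q$ admits a splitting algorithm by Kronecker's classical factorisation method. The fact that $\Alg$ is B\'ezout was already recorded in the paragraph preceding Proposition \ref{rub}: $\Alg$ is a directed union of the rings of integers of number fields, each a Dedekind domain with finite (hence torsion) class group, and the root-extraction trick sketched there produces a single generator for every finitely generated ideal of the union. Proposition \ref{rub} then yields decidability of the theory of $\Alg$-modules.

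For part 2), I would take $A = \F_p[t]$, whose algebraic closure in $\wt{\F_p(t)}$ is the ring $B$ under consideration. Again $A$ is countable, effectively given, and a PID, so its nonzero primes are maximal. The quotient field $\F_p(t)$ admits a splitting algorithm: factorisation of a univariate polynomial over $\F_p(t)$ reduces, after clearing denominators, to factorisation in $\F_p[t][y]$, which is effective because $\F_p$ is finite and $\F_p[t]$ is therefore effectively factorial (Berlekamp-type methods combined with Gauss' lemma). The ring $B$ is B\'ezout by the union argument recalled before Proposition \ref{rub}: it is a directed union of the integral closures of $\F_p[t]$ in finite extensions of $\F_p(t)$, each a Dedekind domain with torsion Picard group, and the $n$-th root trick supplies a single generator for any finitely generated ideal of the union.

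Feeding these data into Proposition \ref{rub} delivers both conclusions. The only items that need to be checked independently are the splitting algorithms for $\Q$ and $\F_p(t)$ and the B\'ezout property of the two algebraic closures; all three are classical. The heaviest bookkeeping--matching Rabin's effective numeration of $\wt Q$ from \cite{Rab} to an effective enumeration of $B$ sitting inside it--is exactly the step handled inside the proof of Proposition \ref{rub}, so nothing new has to be organised here; I expect the real obstacle in a fully written proof is rhetorical, namely being explicit that the splitting algorithm hypothesis (rather than mere effectiveness) is what is needed to recover $A$, and hence $B$, effectively from the Rabin list.
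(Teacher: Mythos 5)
Your proposal is correct and takes essentially the same route as the paper: both parts are obtained by feeding $A=\Z$ and $A=\F_p[t]$ respectively into Proposition~\ref{rub}, invoking Kronecker for the splitting algorithm over $\Q$ and the Gauss'-lemma reduction to factoring in $\F_p[t][x]$ for $\F_p(t)$, and recalling that the algebraic closures are B\'ezout by the directed-union argument preceding the proposition. The only cosmetic difference is that the paper cites Lenstra's multivariate factoring algorithm over finite fields where you invoke Berlekamp-type methods; these serve the same purpose.
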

\begin{proof}
We start with $A= \Z$ in the case of algebraic integers, and with $A= \F_p[t]$ in the second case, both are effectively
given principal ideal domains whose algebraic closures are B\'ezout. To apply Proposition \ref{rub} we need a
splitting algorithm for their quotient fields. In case of integers $Q$ is the field of rationals, hence such 
algorithm is known from Kronecker.

In the second case $Q$ is the function field $\F_p(t)$, hence the required algorithm can be extracted for instance from \cite{Len}.
Namely let $f(x)$ be a polynomial whose coefficients are rational functions. By clearing denominators we may assume
that these coefficients are in $\F_p[t]$. Since $\F_p[t]$ is a factorial ring, by Gauss' lemma, this polynomial is
irreducible over $\F_p(t)$ iff it is irreducible over $\F_p[t]$, i.e. as a polynomial in two variables. However the
splitting algorithm for such polynomials over finite fields is known from Lenstra \cite{Len}. Of course factoring in $\F_p[t][x]$ is not just the same as factoring in $\F_p[t,x]$, because, for instance, $t$ is a coefficient in the former case and an irreducible factor in the latter; but a factorization in $\F_p[t,x]$ clearly determines that in $\F_p[t][x]$.
\end{proof}

Note that in both cases one could construct the algebraic closure directly, hence apply Proposition \ref{effect}.

Namely in the case of $\A$ (see \cite[Ch.~3]{Coh}) each algebraic integer can be given by a monic irreducible
polynomial $f(x)\in \Z[x]$ and an approximation good enough to separate it from other roots of $f$ (this
approximation can be found very effectively - see \cite[p.~132]{Rum} or \cite{Pet}).

To execute the addition $a+b$ of algebraic integers defined by polynomials $f(x)$ and $g(x)$ we calculate the
resultant $h(x)$ of $f(x)$ and $g(x-y)$ considered as polynomials in $y$ whose coefficients are in $\Z[x]$.
Then $a+b$ is a root of $h(x)$. To separate it from the remaining roots of this polynomial we might need a
better approximation for $a$ and $b$, but this is easily arranged.

Similarly the product $a\cdot b$ is a root of the resultant $y^m f(x/y)$ and $g(y)$, where $m$ is the degree of
$f(x)$.

Finally the ratio $c= a/b$ (which is an algebraic number) is a root of the resultant of $f(xy)$ and $g(x)$.
To decide whether this ratio is an algebraic integer one could find, calculating in $\Q(a,b)$, the minimal
polynomial of $c$ and check whether it is monic.

In the case $\F_p[t]$ we are less certain. It appears (see \cite[after 1.11]{D-M}) that the algebraic closure of
this ring equals to the union of fields $\F_{p} (t^{1/p^d})[x]/(f)$, where $f(x)$ is an irreducible polynomial;
however it is difficult to find a proof of this result. Note that nontrivial polynomials occur: for instance 
(see \cite{Ked}) the Artin--Schreier polynomial $f= x^p- x- t^{-1}$ has no roots even in the corresponding power 
series field.

\begin{ques}
Is the theory of all $\wt{\Q[t]_{(t)}}$-modules decidable?
\end{ques}

We will complete this section by the following conjecture.

\begin{conj}\label{a-spec}
The Ziegler spectrum over the ring of algebraic integer $\Alg$ is a spectral space.
\end{conj}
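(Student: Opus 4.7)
The plan is to verify spectrality by working with the natural basis of pp-pair open sets provided by Corollary~\ref{zig-basis}. This basis consists of sets $(\phi/\psi)$ with $\phi\doteq a\mid x\wg xb=0$ and $\psi\doteq c\mid x+xd=0$, and a standard fact from the model theory of modules ensures that every such pp-pair open set is quasi-compact in the Ziegler spectrum of any ring. So the only remaining task is to show that this basis is closed under finite intersections.

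First I would reduce to intersections of two basic opens. Given $(\phi_i/\psi_i)$ with $\phi_i\doteq a_i\mid x\wg xb_i=0$ and $\psi_i\doteq c_i\mid x+xd_i=0$, I would form the candidates $\phi\doteq \lcm(a_1,a_2)\mid x\wg x\gcd(b_1,b_2)=0$ and $\psi\doteq \gcd(c_1,c_2)\mid x+x\lcm(d_1,d_2)=0$, both in the standard form of Corollary~\ref{zig-basis} thanks to the B\'ezout property of $\Alg$. Using Fact~\ref{B-eli}, $\phi$ is equivalent to $\phi_1\wg\phi_2$ and $\psi$ to $\psi_1+\psi_2$, so one direction is immediate: $(\phi/\psi)\seq (\phi_1/\psi_1)\cap(\phi_2/\psi_2)$. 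The heart of the matter is the reverse inclusion, the ``joint refinement'' forcing a single witness inside $(\phi/\psi)$ whenever witnesses exist separately in the two factors.

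To obtain this reverse inclusion, I would localize at every nonzero prime ideal $P$ of $\Alg$. Each $\Alg_P$ is a valuation domain whose value group is isomorphic to $(\Q,+,\leq)$, so a pp-pair open set over $\Alg$ restricts to a rectangle in $\wh\Ga^+\times\wh\Ga^+$ over $\Alg_P$ as described in Section~\ref{S-val}; the intersection of two such rectangles is again a rectangle, exactly by the picture in the proof of Lemma~\ref{sober}. The combinatorial match is that taking $\lcm$'s and $\gcd$'s of elements of $\Alg$ corresponds, after passing to $\Alg_P$, to taking maxima (for the left/bottom sides of the rectangle) and minima (for the right/top sides), which is precisely the rectangular intersection. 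One then glues these local verifications over all maximal $P$, invoking the description of indecomposable pure-injective modules over commutative B\'ezout domains in \cite{P-T15} so that every point of $\Zg_\Alg$ is either the field of fractions $\wt\Q$ or lies in some $\Zg_{\Alg_P}$, and the pp-pair opens behave well under these embeddings and under localization.

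The principal obstacle will be this global-to-local bookkeeping. I would need to verify that membership of an indecomposable pure-injective $M\in\Zg_\Alg$ in the intersection $(\phi_1/\psi_1)\cap(\phi_2/\psi_2)$ can be detected after localizing at the prime naturally associated to $M$, and that two separate witnessing elements $x_1\in\phi_1(M)\sm\psi_1(M)$ and $x_2\in\phi_2(M)\sm\psi_2(M)$ can be combined into a single witness in $(\phi_1\wg\phi_2)(M)\sm(\psi_1+\psi_2)(M)$. Over a valuation domain this is clear from the rectangle picture, but transferring the joint refinement correctly to arbitrary indecomposable pure-injectives over $\Alg$, including the generic point $\wt\Q$ where nontrivial divisibility formulae are everywhere satisfied while annihilator formulae collapse to $x=0$, appears to require the full pp-formula calculus of \cite{P-T15}. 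Once this is in place, spectrality of $\Zg_\Alg$ reduces directly to spectrality of each local $\Zg_{\Alg_P}$, which is Lemma~\ref{sober}.
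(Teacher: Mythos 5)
The statement you are attempting to prove is labelled as a \emph{conjecture} in the paper; the authors deliberately leave it open, so there is no proof of theirs to compare your attempt against. What can be said is whether your strategy, taken on its own terms, is sound.

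It is not, and the gap appears at the very first substantial step. You propose that the intersection $(\phi_1/\psi_1)\cap(\phi_2/\psi_2)$ is realised by the single pp-pair with $\phi\doteq\lcm(a_1,a_2)\mid x\wg x\gcd(b_1,b_2)=0$ and $\psi\doteq\gcd(c_1,c_2)\mid x+x\lcm(d_1,d_2)=0$, i.e.\ by $\phi_1\wg\phi_2$ over $\psi_1+\psi_2$. This fails already over a single valuation domain $V=B_P$. Passing to the diagonal projection as in Section~\ref{S-val}, the open set $(\phi_i/\psi_i)$ corresponds to the interval $\bigl(v(a_id_i),\,v(b_ic_i)\bigr]$, so the set-theoretic intersection of the two open sets is the interval $\bigl(\max(v(a_1d_1),v(a_2d_2)),\ \min(v(b_1c_1),v(b_2c_2))\bigr]$. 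Your candidate pair projects instead to $\bigl(\max(v(a_1),v(a_2))+\max(v(d_1),v(d_2)),\ \min(v(b_1),v(b_2))+\min(v(c_1),v(c_2))\bigr]$. The first left endpoint dominates the second and they differ whenever the maxima are achieved at opposite indices: for instance if at some prime $v_P(a_1)=3$, $v_P(d_1)=0$, $v_P(a_2)=0$, $v_P(d_2)=3$, then your left endpoint is $6$ but the true one is $3$. So $(\phi/\psi)\subsetneq(\phi_1/\psi_1)\cap(\phi_2/\psi_2)$ in general, and the ``reverse inclusion'' you defer to localization is simply false for your choice of $\phi$ and $\psi$. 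Note also that the picture in the proof of Lemma~\ref{sober} does \emph{not} take $W$ to be the rectangle $U\cap V$; it takes a different rectangle whose diagonal projection is the intersection of the projections, and the paper explicitly remarks that ``the sides of $W$ may vary''.

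The genuine difficulty, and presumably the reason the authors pose this as a conjecture, is exactly the global bookkeeping you flag as ``the principal obstacle'' but do not resolve: over a single $\Alg_P$ one can always pick a rectangle projecting onto the interval $\bigl(\max(v(a_1d_1),v(a_2d_2)),\min(v(b_1c_1),v(b_2c_2))\bigr]$, but the choice of how to split the endpoints into horizontal and vertical contributions is non-canonical and may need to differ from prime to prime. What would be required is a \emph{single} pair of elements $a,b,c,d\in\Alg$ (or a finite union of such pairs) whose local rectangles realise the correct intersection interval simultaneously at every relevant prime. You have not exhibited such elements, and the $\lcm$/$\gcd$ combination you propose does not do it. Your appeal to the local spectrality (Lemma~\ref{sober}) therefore does not ``glue'' to give the global statement.
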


\section{Conclusions}

Recall that Gregory \cite{Gre15} proved that the theory of all modules over an effectively given valuation
domain $V$ (with a prescribed size of the residue field) is decidable if and only if the prime radical
relation in $V$ is computable. Bearing this in mind one would expect that the following metatheorem can be
proven. Suppose that $B$ is an effectively given countable B\'ezout domain such that the prime radical relation
is decidable and few (yet unspecified) questions on finite invariants (and hence on the maximal spectrum of
$B$) can be answered effectively. Then the first order theory of all $B$-modules is decidable.

However taking into account the very technical proofs in \cite{Gre15} one should be urged by nice examples to
embark on such a journey. In our opinion one should be guided by examples to create theories. From this point
of view dropping the countability of the ring assumption is a more urgent issue.

Given a sentence $\phi$ in the language of modules over the ring $R$, it mentions only finitely many names
$\ov r$ of elements of $R$. Could one (uniformly in $\ov r$) decide, using only finite fragments of the ring,
whether this sentence holds true in each $R$-module?

The following instance of this question seems to be of particular importance in the spirit of this paper.

\begin{ques}
Let $E$ be the ring of entire complex or real functions. Is the theory of all $E$-modules decidable?
\end{ques}

Another perspective line of research would be to consider a combined two-sorted language of rings and modules
(see \cite[Ch.~9]{J-L}). Guided by the decidability of the theory of algebraic integers in the language of
rings (see \cite{Dri}) and Theorem \ref{a-decid} we dare to ask the following.

\begin{ques}
Is the first order theory of algebraic integers in the two-sorted language decidable?
\end{ques}


\begin{thebibliography}{99}

\bibitem{A-M} M. Atiyah, I.G. McDonald, Introduction to Commutative Algebra, Addison-Wesley Series in
Mathematics, Westview Press, 1994.

\bibitem{Coh} H. Cohen, A Course in Computational Algebraic Number Theory, Graduate Texts in Mathematics,
Vol. 138, Springer, 1996.

\bibitem{Dri} L. van den Dries, Elimination theory for the ring of algebraic integers, J. Reine Angew. Math.,
388 (1988), 189--205.

\bibitem{D-M} L. van den Dries, A. Macintyre, The logic of Rumely's local-global principle, J. Reine Angew. Math.,
407 (1990), 33--56.

\bibitem{Ersh} Yu.L. Ershov, Decidability Problems and Constructive Models, Moscow, 1980.

\bibitem{FSh} A. Fr\"olich, J.C. Sheperdson, On the factorization of polynomials in a finite number of steps,
Math. Z., 62 (1955), 331--334.

\bibitem{F-S} L. Fuchs, L. Salce, Modules over non-Noetherian Domains,  Mathematical Surveys and Monographs,
Vol. 84, American Mathematical Society, 2001.

\bibitem{Gre15} L. Gregory, Decidability for the theory of modules over valuation domains, J. Symbolic Logic,
80 (2015), 684--711.

\bibitem{Hoch} M.~Hochster, Prime ideal structure in commutative rings, Trans. Amer. Math. Soc., 149 (1969),
43--60.

\bibitem{Jan} G.J. Janusz, Algebraic Number Fields, Graduate Studies in Mathematics, Vol. 7, American Mathematical
Society, 1996.

\bibitem{J-L} C.U. Jensen, H. Lenzing, Model Theoretic Algebra with particular emphasis on Fields, Rings, Modules,
Algebra, Logic and Applications, Vol. 2, CRC Press, 1989.


\bibitem{Ked} K.S. Kedlaya, The algebraic closure of the power series field in positive characteristic,
Trans. Amer. Mathþ Soc., 29(12) (2001), 3461--3470.

\bibitem{Len} A.K.~Lenstra, Factoring Multivariate polynomials over finite fields, J. Comput. System Sci.,
30 (1985), 235--248.

\bibitem{Pet} M. Petrovi\'c, Iterative Methods for Simultaneous Inclusions of Polynomial Zeros, Lecture Notes in
Mathematics, Vol. 1387, Springer, 1989.

\bibitem{Preb1} M. Prest, Model Theory and Modules, London Mathematical Society Lecture Notes Series, Vol. 130,
Cambridge University Press, 1988.

\bibitem{Preb2} M.~Prest, Purity, Spectra and Localization, Encyclopedia of Mathematics and its Applications,
Vol.~121, Cambridge University Press, 2009.

\bibitem{P-S} A. Prestel, K. Schmid, Existentially closed domains with radical relations, J. Reine Angew. Math.,
407 (1990), 178--201.

\bibitem{Punb} G. Puninski, Serial Rings, Kluwer, 2001.

\bibitem{PPT} G. Puninski, V. Puninskaya, C. Toffalori, Decidability of the theory of modules over commutative
valuation domains, Ann. Pure Appl. Logic, 145 (2007), 258--275.

\bibitem{P-T15} G. Puninski, C. Toffalori, Some model theory of modules over B\'ezout domains. The width,
J. Pure Appl. Algebra, 219 (2015), 807--829.

\bibitem{Rab} M.O. Rabin, Computable algebra, general theory and theory of computable fields, Trans. Amer.
Math. Soc., 93 (1960), 341--360.

\bibitem{Rum} R. Rumely, Arithmetic over the ring of all algebraic integers, J. Reine Angew. Math., 368 (1986),
127--133.

\bibitem{Will} S. Willard, General Topology, Dover Books on Mathematics, 2004.


\end{thebibliography}
\end{document}